\newcommand{\bburl}[1]{\textcolor{blue}{\url{#1}}}
\theoremstyle{plain} 
\newtheorem{theorem}             {Theorem}  [section]
\newtheorem{lemma}{Lemma}[section]
\theoremstyle{definition}
\newtheorem{conj}{Conjecture}
\newtheorem{rek}{Remark}
\newtheorem{defn}{Definition}[section]
\theoremstyle{remark}
\newcommand{\seqnum}[1]{\href{https://oeis.org/#1}{\rm \underline{#1}}}
\def\modd#1 #2{#1\ \mbox{\rm (mod}\ #2\mbox{\rm )}}
\numberwithin{equation}{section}
\begin{document}

\author{Muhammet Boran}
\address{Department of Mathematics, Yıldız Technical University, 34220 Esenler, Istanbul, TURKEY}
\email{\bburl{muhammet.boran@std.yildiz.edu.tr}}

\author{Garam Choi}
\address{Department of Mathematics, Colby College, Waterville, ME 04901, USA}
\email{\bburl{gchoi23@colby.edu}}

\author{Steven J.\ Miller}
\address{Department of Mathematics and Statistics, Williams College, Williamstown, MA 01267, USA}
\email{\bburl{sjm1@williams.edu}}

\author{Jesse Purice}
\address{School of Mathematical and Statistical Sciences, Arizona State University, Tempe, AZ 85287, USA}
\email{\bburl{jpurice@asu.edu}}

\author{Daniel Tsai}
\address{Department of Mathematics, National Taiwan University, No. 1, Sec. 4, Roosevelt Rd., Taipei 10617, Taiwan (R.O.C.)}
\email{\bburl{tsaidaniel@ntu.edu.tw}}

\title{A characterization of prime $v$-palindromes}

\begin{abstract}
An integer $n\geq 1$ is a $v$-palindrome if it is not a multiple of $10$, nor a decimal palindrome, and such that the sum of the prime factors and corresponding exponents larger than $1$ in the prime factorization of $n$ is equal to that of the integer formed by reversing the decimal digits of $n$. For example, if we take 198 and its reversal 891, their prime factorizations are $198 = 2\cdot 3^2\cdot 11$ and $891  =  3^4\cdot 11$ respectively, and summing the numbers appearing in each factorization both give 18. This means that $198$ and $891$ are $v$-palindromes. We establish a characterization of prime $v$-palindromes: they are precisely the larger of twin prime pairs of the form $(5 \cdot 10^m - 3, 5 \cdot 10^m - 1)$, and thus standard conjectures on the distribution of twin primes imply that there are only finitely many prime $v$-palindromes.
\end{abstract}

\subjclass[2010]{Primary 11A45, 11A63.}
\keywords{Prime $v$-palindromes, Iverson bracket, Cram\'er model}

\thanks{This work was supported in part by the 2022 Polymath Jr REU program.}

\maketitle

\section{Introduction}

\subsection{Related Work}

There have been many papers studying properties shared by numbers and their reversals. We first set some notation.

\begin{defn}
  Let $b\geq2$, $L\geq1$, and $0\leq a_0,a_1,\ldots,a_{L-1}<b$ be any integers. We denote
  \begin{equation}
    (a_{L-1}\cdots a_1a_0)_b \ :=\  \sum^{L-1}_{i=0}a_ib^i.
  \end{equation}
  We also write $(a_{L-1},\ldots,a_1,a_0)_b$ to make it clear which are each digit.
\end{defn}

\begin{defn}
Let the base $b\geq2$ representation of an integer $n\geq1$ be\\ $(a_{L-1}\cdots a_1a_0)_b$, where $a_{L-1}\neq0$. The $b$-\emph{reverse} of $n$ is defined to be
\begin{equation}
  r_b(n) \ :=\  (a_0a_1\cdots a_{L-1})_b.
\end{equation}
We write $r(n)$ for $r_{10}(n)$.
\end{defn}
So for example $r(198) = 891$.

In \emph{A Mathematician's Apology} \cite{Har}, G.\ H.\ Hardy states that ``8712 and 9801 are the only 4-digit numbers which are integral multiples of their decimal reversal'':
\begin{equation}
    8712\ =\ 4\cdot r(8712),\ \ 9801\ = \ 9\cdot r(9801).
\end{equation}
In 1966, A.\ Sutcliffe  \cite{Su} generalized this observation and studied all integer solutions of the equation
\begin{equation}
    k\cdot r_{b}(n)\ = \ n,
\end{equation} where $b\geq2$ is the base and $0 < k < n$.  In  \cite{KS}, numbers $n$ such that $n$ divides $r(n)$ are mentioned. In particular, numbers of the form
\begin{equation}
    2178,\ \ \ 21978,\ \ \ 219978,\ \ \ 2199978,\ \ \ \dots,
\end{equation}
with any number of $9$'s in the middle, all satisfy $4n=r(n)$.

Suppose that the prime factorization of an integer $n\geq1$ is
\begin{equation}
  n\ =\ p_{1}^{\alpha_1}\cdots p_{k}^{\alpha_k},
\end{equation}
where $p_1<\cdots <p_k$ are primes and $\alpha_1,\ldots,\alpha_k\geq1$ integers. In 1977, P.\ Erd\H{o}s and K.\ Alladi  \cite{AE}  studied the function
\begin{equation}  \label{prfctr}
  A(n)\ =\ \sum_{i=1}^{k} p_i \cdot \alpha_i.
\end{equation}
The entries A008474 and A000026 from the OEIS \cite{OEIS} are
\begin{align}
  F(n) &\ =\ \sum_{i=1}^{k} \left(p_i +\alpha_i\right), \\
  G(n)  & \ =\ \prod_{i=1}^k p_i \cdot \alpha_i ,
\end{align}
respectively. These functions are somehow similar in expression. We introduce an arithmetic function denoted by $v(n)$ which is obtained from $F(n)$ by replacing $\alpha_i$ with 0 when $\alpha_i = 1$. In other words,
\begin{equation}\label{defofv}
  v(n) \ =\ \sum_{\substack{1\leq i\leq k,\\ \alpha_i = 1}}p_i + \sum_{\substack{1\leq i\leq k,\\ \alpha_i \geq 2}} \left(p_i +\alpha_i\right).
\end{equation}

\subsection{$v$-palindromes}

The concept of $v$-palindromes was introduced by Tsai in \cite{tsai0,tsai} and explored further in four later manuscripts \cite{tsai6,tsai3,tsai4,tsai5}. As in the abstract, consider the number $198$ whose digit reversal is $891$. Their prime factorizations are
\begin{align}
  198 &\ =\  2\cdot 3^2\cdot 11,\label{rei1}\\
  891 &\ =\  3^4\cdot 11\label{rei2},
\end{align}
and we have
\begin{equation}
  2+(3+2)+11 \ = \ (3+4) + 11.
\end{equation}
In other words, the sum of the numbers ``appearing'' on the right-hand side of \eqref{rei1} equals that of \eqref{rei2}. We first give the following definitions.

\begin{defn}
  The additive function $v\colon \mathbb{N}\to\mathbb{Z}$ is defined by setting $v(p):=p$ for primes $p$ and $v(p^\alpha):=p+\alpha$ for prime powers $p^\alpha$ with $\alpha\geq2$.
\end{defn}

Notice that this definition of $v(n)$ agrees with \eqref{defofv}. We define $v$-palindromes as follows.

\begin{defn}\label{def.4}
  Let $n\geq1$ and $b\geq2$ be integers. Then $n$ is a $v$-\emph{palindrome in base} $b$ if $b\nmid n$, $n\neq r_b(n)$, and $v(n) = v(r_b(n))$. A $v$-palindrome in base $10$ is simply called a $v$-\emph{palindrome}.
\end{defn}

Thus $198$ and $891$ are $v$-palindromes. The following are infinite sequences of $v$-palindromes \cite{tsai0,tsai}:
\begin{gather}
  18,\ 198,\ 1998,\  19998,\  199998,\  1999998,\  \dots,\label{ten1}\\
  18,\  1818,\  181818,\  18181818,\  1818181818,\  181818181818,\    \dots\label{ten2}.
\end{gather}

In \eqref{ten1}, we simply keep increasing the number of $9$'s in the middle; in \eqref{ten2}, we simply keep appending another $18$. The sequences \eqref{ten1} and \eqref{ten2} are actually parts of a larger family of $v$-palindromes derived in \cite[Theorem 3]{tsai3}. In particular, there are infinitely many $v$-palindromes. According to \cite{tsai0}, the $v$-palindromes $n\leq 10^5$ with $n<r(n)$ are
\begin{align}
  &18,\  198,\  576,\  819,\  1131,\  1304,\  1818,\  1998,\  2262,\  3393,\  4154,\  4636,\  8749,\  12441,\nonumber \\
  &14269,\ 14344,\ 15167,\ 15602,\ 16237,\ 18018,\ 18449,\ 18977,\ 19998,\ 23843,\ 24882,\nonumber\\
  &26677,\ 26892,\ 27225,\  29925,\  31229,\  36679,\  38967,\  39169,\  42788,\  45694,\  46215,\nonumber \\
  &46655,\  47259,\  48048,\  52416,\  56056,\  60147,\  62218,\  66218,\  79689,\  97999.\nonumber
\end{align}
The sequence of $v$-palindromes $n$ (whether $n<r(n)$ or not) is \seqnum{A338039} in the OEIS \cite{OEIS}. In \cite{tsai0}, it is said that extensive computer calculations suggest the following.

\begin{conj}\label{1.conj}
There are no prime $v$-palindromes.
\end{conj}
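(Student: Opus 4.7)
The plan begins by invoking the main characterization theorem of the paper: any prime $v$-palindrome must be of the form $p = 5 \cdot 10^m - 1$ for some $m \geq 0$, with the additional requirement that $5 \cdot 10^m - 3$ is also prime. Conjecture 1.1 is therefore equivalent to the statement that no integer $m \geq 0$ produces a twin prime pair $(5 \cdot 10^m - 3,\ 5 \cdot 10^m - 1)$. I take this reduction as given and concentrate on ruling out every such $m$.

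The most concrete strategy would be to build a \emph{covering system} on $m$: a finite list of primes $q_1, \dots, q_k$ and congruences $m \equiv a_j \pmod{d_j}$ such that for every $m \geq 0$ some $j$ forces $q_j$ to divide one of $5 \cdot 10^m - 1$ or $5 \cdot 10^m - 3$ while the quotient exceeds $1$. For example, since $10$ has order $6$ modulo $7$, a short computation shows $7 \mid 5 \cdot 10^m - 1$ exactly when $m \equiv 1 \pmod 6$ and $7 \mid 5 \cdot 10^m - 3$ exactly when $m \equiv 2 \pmod 6$, immediately handling one-third of all residues. Continuing with $q = 13, 17, 19, 23, \dots$, I would compute $\ord_{q}(10)$ and determine the residues of $m$ for which $10^m$ equals $5^{-1}$ or $3 \cdot 5^{-1}$ modulo $q$, adding those arithmetic progressions to the covering. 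Any finitely many residues still remaining, together with all $m$ up to some computational threshold $M_0$, would be eliminated by direct factorization of the two candidate numbers.

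The main obstacle, and the reason I expect this program cannot close, is that a complete covering system of this sort almost certainly does not exist. The density of $m$ contributed by a prime $q$ is at most $2/\ord_{q}(10)$, whereas the Hardy--Littlewood heuristic predicts the twin prime pairs we are trying to exclude occur with density on the order of $(\log(5 \cdot 10^m))^{-2}$—positive, though vanishingly small. Standard probabilistic models (the Cram\'er model cited in the keywords, or the Hardy--Littlewood $k$-tuple conjecture) therefore predict only \emph{finitely many} such twin primes, not zero. Consequently, Conjecture 1.1 is strictly stronger than anything current sieve-theoretic or heuristic methods can establish, and a rigorous proof would require either a genuine algebraic obstruction specific to the sequences $5 \cdot 10^m \pm c$ (none of which is apparent to me) or an analytic breakthrough well beyond the present state of the art for twin prime problems on sparse sequences. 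The characterization reduces the conjecture to a clean Diophantine statement, but closing the remaining gap seems to lie outside present techniques.
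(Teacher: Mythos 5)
The statement you were asked about is a \emph{conjecture}, and the paper does not prove it: it only establishes the characterization (Theorem \ref{thm:main}) and the conditional finiteness result (Theorem \ref{thm:heuristicfinite}). Your proposal correctly recognizes this. Your reduction to the twin-prime statement about $(5\cdot 10^m-3,\,5\cdot 10^m-1)$ is exactly the paper's Theorem \ref{thm:main} (with the minor correction that the paper restricts to $m\ge 4$ after a finite computer check), and your closing observation---that standard heuristics predict only finitely many such pairs, not zero, so the conjecture lies beyond current techniques---is precisely the content of the paper's Section \ref{heuristic}. So your assessment of the state of the problem matches the authors'.

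Your covering-system digression is an idea the paper does not pursue, and it is worth a caution. The inference ``Hardy--Littlewood predicts a positive probability at each $m$, therefore no covering system exists'' is not valid: when a covering system does exist (as in Sierpi\'nski-type constructions), it simply overrides the naive heuristic for that particular sparse sequence. Whether a covering of all residues of $m$ by congruences forcing $q\mid 5\cdot 10^m-1$ or $q\mid 5\cdot 10^m-3$ exists is a concrete finite question about the orders $\ord_q(10)$, and your density bound $2/\ord_q(10)$ per prime does not rule it out, since such densities can in principle sum to $1$ or more over a suitable finite set of primes. That said, your bottom line is correct: no such algebraic obstruction is known, the paper does not claim one, and Conjecture \ref{1.conj} remains open; the most one can currently assert is the conditional finiteness of Theorem \ref{thm:heuristicfinite}.
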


\subsection{New Results}

We are able to make significant progress towards a possible proof of Conjecture \ref{1.conj} by proving the following characterization of prime $v$-palindromes.

\begin{theorem}\label{thm:main}
  The prime $v$-palindromes are precisely the primes of the form
  \begin{equation}
    5\cdot 10^m - 1 \ =\  4\underbrace{9\cdots 9}_{m},\label{p1}
  \end{equation}
  for some integer $m\geq4$, such that
  \begin{equation}
    5\cdot 10^m-3 \ =\  4\underbrace{9\cdots 9}_{m-1}7 \label{p2}
  \end{equation}
  is also prime.
\end{theorem}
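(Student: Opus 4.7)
The plan is to prove both directions of the biconditional.

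For the ``if'' direction, suppose that for some $m \geq 4$ both $n = 5 \cdot 10^m - 1$ and $q := 5 \cdot 10^m - 3$ are prime. Writing $n = 4\underbrace{9\cdots 9}_m$, we have $r(n) = \underbrace{9\cdots 9}_m 4 = 10^{m+1} - 6 = 2q$, so by additivity $v(r(n)) = 2 + q = n = v(n)$. Since $r(n) \neq n$ and $10 \nmid n$, it follows that $n$ is a prime $v$-palindrome.

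For the converse, assume $n$ is a prime $v$-palindrome, so $v(r(n)) = v(n) = n$. First I would establish the elementary inequality $v(m) \leq m$ for every $m \geq 1$, with equality if and only if $m$ is prime or $m = 4$; this follows from additivity of $v$, the inequality $v(p^\alpha) \leq p^\alpha$ (equality iff $\alpha = 1$ or $(p,\alpha) = (2,2)$), and the estimate $a + b \leq ab$ for integers $a, b \geq 2$. Since $r(n) \neq n$, $r(n)$ cannot itself be prime (else $r(n) = v(r(n)) = n$) and obviously $r(n) \neq 4$, so $v(r(n)) < r(n)$ and hence $r(n) > n$.

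The main technical step is to show $r(n) = 2(n-2)$ with $n-2$ prime. Writing $r(n) = \prod p_i^{\alpha_i}$, I use $v(r(n)) = n$ (odd prime) together with the digit-count bound $r(n) < 10^L$ where $n$ has $L$ digits, and perform a case analysis on the factorization. If $r(n)$ is squarefree, the odd parity of $n$ restricts the prime factor multiset, and in every configuration except $r(n) = 2q$ with $q = n - 2$, the product $\prod p_i$ outgrows $\sum p_i$ and violates the digit-count bound. If $r(n)$ is not squarefree, prime-power forms $r(n) = p^\alpha$ and mixed forms such as $r(n) = 2 q^b$, $r(n) = 2^a q$ with $a \geq 3$, or $r(n) = 16 q$ are each either killed by parity (forcing the largest prime factor to equal $2$, contradicting its maximality) or bounded to a finite list of small $n$ that can be checked by hand. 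The one stubborn subcase is $r(n) = 4(n-4)$: here mod-$10$ forces $n \equiv 7 \pmod{10}$ with leading digit $2$, but then $r(n) = 4n - 16 \in [7 \cdot 10^{L-1}, 8 \cdot 10^{L-1})$ (since $r(n)$ begins with the last digit of $n$, namely $7$) traps $n$ in $[2 \cdot 10^{L-1}, 2 \cdot 10^{L-1} + 4)$, an interval containing no integer ending in $7$.

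With $r(n) = 2n - 4$ in hand, a digit-by-digit analysis finishes the proof. Writing $n = d_{L-1} \cdots d_0$, the congruence $d_{L-1} \equiv 2 d_0 - 4 \pmod{10}$, combined with $d_0 \in \{1, 3, 7, 9\}$ and the size bound $n \leq 5 \cdot 10^{L-1} + 1$, forces $(d_0, d_{L-1}) = (9, 4)$. Computing $2n$ with carries (the carry out of position $i$ is the Iverson bracket $[d_i \geq 5]$, independent of the incoming carry), the equation $r(n) = 2n - 4$ becomes the recursion $(2 d_i + [d_{i-1} \geq 5]) \bmod 10 = d_{L-1-i}$ for $1 \leq i \leq L - 1$; pairing positions $i$ and $L-1-i$ yields a small linear system whose unique digit solution is $d_i = d_{L-1-i} = 9$. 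Hence $n = 4\underbrace{9\cdots 9}_{L-1} = 5 \cdot 10^{L-1} - 1$, and $v(r(n)) = n$ forces $5 \cdot 10^{L-1} - 3$ to be prime. Setting $m = L - 1$, direct checks at $m = 0, 1, 2, 3$ yield the lower bound $m \geq 4$. The main obstacle of the whole proof is the case analysis in the preceding paragraph: each alternative factorization of $r(n)$ must be systematically eliminated, and the $r(n) = 4(n-4)$ subcase in particular requires the delicate range argument.
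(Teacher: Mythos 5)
Your overall strategy coincides with the paper's: the easy converse via $r(5\cdot 10^m-1)=2(5\cdot 10^m-3)$, and for the forward direction a reduction to $r(n)=2(n-2)$ followed by a digit-and-carry analysis. The paper organizes the factorization case analysis by the number of decimal digits of the largest prime factor of $r(n)$ rather than by the shape of the factorization, and it kills $r(n)=4(n-4)$ via the leading-digit inequality $4b_m\le a_0$ rather than your interval-trapping argument, but these are organizational differences, not different ideas. However, two steps in your endgame are overstated and, as written, would fail.

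First, the claim that the congruence $d_{L-1}\equiv 2d_0-4\pmod{10}$, the condition $d_0\in\{1,3,7,9\}$, and the size bound on $n$ together force $(d_0,d_{L-1})=(9,4)$ is false: $(d_0,d_{L-1})=(3,2)$ satisfies all three constraints. You need an extra argument to eliminate it, e.g., if $d_0=3$ then $r(n)$ has leading digit $3$, while $r(n)=2n-4\ge 4\cdot 10^{L-1}-2$ forces its leading digit to be at least $4$; this is the same style of argument you already deploy for the $4(n-4)$ subcase, and it corresponds to the paper's inequality $fb_m\le a_0$. Second, the paired congruences $(2d_i+[d_{i-1}\ge 5])\equiv d_{L-1-i}\pmod{10}$ and $(2d_{L-1-i}+[d_{L-2-i}\ge 5])\equiv d_i\pmod{10}$ do \emph{not} have $d_i=d_{L-1-i}=9$ as their unique solution: with carry pattern $(1,0)$ they admit $(d_i,d_{L-1-i})=(6,3)$, and in an outside-in induction the inner carry $[d_{L-2-i}\ge 5]$ is not yet known. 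The paper closes this by first showing $d_{L-1-i}\ge 5$ (otherwise the product $2(p-2)$ would carry an $8$ into a position where $r(p)$ has a $9$), which restricts $d_{L-1-i}\in\{5,7,9\}$ and only then lets the congruence finish; your write-up needs this carry-propagation constraint to make the ``linear system'' determinate. The factorization case analysis is also only illustrated (``such as''), but the principle you invoke --- largest prime forced close to $n$, hence cofactor at most about $10$, then parity --- does cover all cases, so I regard that part as a fillable sketch rather than a gap.
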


Here, the $m\geq4$ is not very significant and only means that it has been checked that there are no prime $v$-palindromes of fewer than $5$ decimal digits, and thus conceivably can be improved with more checking for small values of $m$. From this characterization of prime $v$-palindromes, it is a consequence of standard models for primes (such as the Cram\'er model, though weaker assumptions suffice) that there are only finitely many prime $v$-palindromes. In particular, we have the following.

\begin{theorem}\label{thm:heuristicfinite} Assume that the probability $n$ and $n+2$ are both prime is bounded by $C / \log^2 n$ for some $C$. Then there are only finitely many prime $v$-palindromes. \end{theorem}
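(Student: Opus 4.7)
The plan is to reduce the statement to a direct first Borel--Cantelli estimate, leaning entirely on Theorem \ref{thm:main} to reformulate prime $v$-palindromes in terms of twin primes of a very sparse shape.

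First, I would invoke Theorem \ref{thm:main} to identify the set of prime $v$-palindromes with the set of integers $m\geq 4$ for which the pair
\begin{equation*}
  (5\cdot 10^m - 3,\ 5\cdot 10^m - 1)
\end{equation*}
consists of two primes. Since the two members of the pair differ by $2$, each such $m$ produces a twin prime pair with smaller element $n_m := 5\cdot 10^m - 3$. Thus it suffices to show that under the stated hypothesis only finitely many $m\geq 4$ yield a twin prime pair at $n_m$.

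Next, I would apply the hypothesis with $n = n_m$. Because $\log n_m = m\log 10 + O(1)$, we have
\begin{equation*}
  \frac{C}{\log^2 n_m} \ \leq\ \frac{C'}{m^2}
\end{equation*}
for some constant $C'$ depending only on $C$ and all $m$ sufficiently large. Summing the probabilities over $m\geq 4$ yields
\begin{equation*}
  \sum_{m\geq 4} \frac{C}{\log^2 n_m} \ \leq\ \sum_{m\geq 4}\frac{C'}{m^2} \ <\ \infty.
\end{equation*}
The first Borel--Cantelli lemma then guarantees that in the underlying probabilistic model (e.g.\ the Cram\'er model) the event that both $n_m$ and $n_m+2$ are prime occurs for only finitely many $m$ almost surely; equivalently, the expected number of such $m$ is finite, and under the heuristic this is what one interprets as ``only finitely many prime $v$-palindromes.''

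There is no serious obstacle once Theorem \ref{thm:main} is in hand: the entire content of the theorem is the observation that the candidates for prime $v$-palindromes are so sparse (one per value of $m$) that the twin-prime probability bound is already summable. The only care needed is to phrase the conclusion consistently with the probabilistic hypothesis, and to note that any of the standard strengthenings (Hardy--Littlewood constant, sieve upper bounds of Brun/Selberg type) provide the assumed bound $C/\log^2 n$ unconditionally, so the theorem holds under those classical inputs as well.
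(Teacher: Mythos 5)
Your proposal is correct and follows essentially the same route as the paper: reduce via Theorem \ref{thm:main} to the pairs $(5\cdot 10^m-3,\,5\cdot 10^m-1)$, bound the probability of each by a constant times $1/m^2$ using $\log(5\cdot 10^m-3)\asymp m$, and conclude from the convergence of $\sum 1/m^2$ that the expected number of such events is finite. The only cosmetic differences are your explicit appeal to Borel--Cantelli (the paper just notes the expectation is finite) and your closing aside about unconditional sieve bounds, which should be stated with care since a Brun--Selberg upper bound counts twin primes up to $x$ rather than assigning a probability to each individual $n$ in a sparse sequence.
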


The main purpose of this paper is to prove Theorem \ref{thm:main}. In Section \ref{sec:lemmas}, we give various definitions and lemmas to be used throughout the rest of the paper. The proof of the forward direction of Theorem \ref{thm:main} consists of Sections \ref{sec:setup} to \ref{sec:otherdec}. The proof of the converse consists of just Section \ref{sec:converse}. In Section \ref{heuristic}, we elaborate on the above-mentioned heuristics that there are only finitely many prime $v$-palindromes.

\section{Preliminaries}\label{sec:lemmas}
We start with some useful definitions.
\begin{defn}
Let $P$ denote any mathematical statement. Then the \emph{Iverson bracket} is defined by
\begin{equation}
    \label{eq:000}
    [ P ] \ :=\  \begin{cases}
            1,& \text{if $P$ is true}, \\
            0, &\text{if $P$ is false}.
            \end{cases}
\end{equation}
\end{defn}

\begin{defn}
For integers $\alpha\geq1$, denote $\iota(\alpha):=\alpha[\alpha>1]$. That is,
\begin{equation}
    \label{eq:000}
    \iota(\alpha) \ :=\  \begin{cases}
            0,& \text{if $\alpha\ =\ 1$}, \\
            \alpha, &\text{if $\alpha\ >\ 1$}.
            \end{cases}
\end{equation}
\end{defn}
With this notation, the additive function $v\colon\mathbb{N}\to\mathbb{Z}$ can be defined by setting in one stroke
\begin{equation}\label{v-p^a}
    v(p^\alpha) \ :=\  p + \iota(\alpha)
\end{equation}
for all prime powers $p^\alpha$.

\begin{defn}
  Let the decimal representation of an integer $n\geq1$ be\\ $(a_{L-1}\cdots a_1a_0)_{10}$, where $a_{L-1}\neq0$. Then we denote
  \begin{gather}
    a_i(n) \ :=\  a_i \quad \text{for $i\  =\  0,1,\ldots,L-1$},\\
    L(n) \ :=\  L,
  \end{gather}
  to indicate dependence on $n$.  We also by convention denote $L(0):=0$.
\end{defn}
For example,
\begin{gather}
    a_0(198) \ =\  8,\quad a_1(198) \ =\  9,\quad a_2(198) \ =\  1,\\
    L(198) \ =\  3.
  \end{gather}

Hence we have defined a function $L\colon\mathbb{N}\cup\{0\}\to\mathbb{Z}$, where $L$ stands for length. We then have the following lemmas the first two of which are obvious and follow immediately from definitions.

\begin{lemma}\label{lem:ob}
Let $0\leq m\leq n$ be integers. Then $L(m)\leq L(n)$.
\end{lemma}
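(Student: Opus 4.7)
The plan is a short contrapositive argument directly from the definition of $L$ as decimal length. For any positive integer $k$, the decimal representation $(a_{L(k)-1}\cdots a_1a_0)_{10}$ with $a_{L(k)-1}\neq 0$ forces the bracketing
\[
10^{L(k)-1}\ \leq\ k\ <\ 10^{L(k)}.
\]
If $m=0$, then $L(m)=0$ by the stipulated convention $L(0):=0$, and $L(m)\leq L(n)$ holds trivially. Otherwise $m\geq 1$ and hence $n\geq m\geq 1$, so the bracketing above applies to both $m$ and $n$.

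With this in hand, set $\ell:=L(n)$, so that $n<10^{\ell}$. Suppose for contradiction that $L(m)>\ell$; then $L(m)\geq \ell+1$, which yields $m\geq 10^{L(m)-1}\geq 10^{\ell}>n$, contradicting $m\leq n$. Hence $L(m)\leq L(n)$, as required. The only point requiring care is to separate out the boundary case $m=0$, since the digit characterization of $L$ presupposes a nonzero leading digit and was extended to $0$ only by convention; beyond that there is no real obstacle, as the lemma is essentially a one-line consequence of the monotonicity of $\log_{10}$ reflected in the definition.
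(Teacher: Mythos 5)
Your proof is correct; the bracketing $10^{L(k)-1}\leq k<10^{L(k)}$ is exactly the paper's Lemma \ref{lem:ran}, and the contrapositive step plus the $m=0$ convention check is the natural argument. The paper itself states this lemma without proof as ``obvious from the definitions,'' and your write-up simply supplies the details of that intended reasoning.
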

\begin{lemma}\label{lem:ran}
Let $n,\ell\geq1$ be integers. Then $L(n) = \ell$ if and only if $10^{\ell-1}\leq n<10^\ell$.
\end{lemma}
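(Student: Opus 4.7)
The plan is to handle both directions directly from the definition of the decimal representation, without any heavy machinery — this is essentially a bookkeeping exercise about leading digits versus magnitudes.

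For the forward direction, I would start by assuming $L(n) = \ell$, which means $n = (a_{\ell-1}\cdots a_1a_0)_{10} = \sum_{i=0}^{\ell-1} a_i \cdot 10^i$ with $a_{\ell-1}\neq 0$ and each $a_i\in\{0,1,\ldots,9\}$. Since the leading digit satisfies $a_{\ell-1}\geq 1$, dropping all lower-order terms yields the lower bound $n\geq a_{\ell-1}\cdot 10^{\ell-1}\geq 10^{\ell-1}$. For the upper bound, bounding each digit by $9$ and summing the geometric series gives
\begin{equation}
n \ \leq\ \sum_{i=0}^{\ell-1} 9\cdot 10^i \ =\ 10^\ell - 1 \ <\ 10^\ell,
\end{equation}
which establishes $10^{\ell-1}\leq n<10^\ell$.

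For the converse, I would argue by contradiction (or equivalently by contrapositive) using the forward direction already proved. Suppose $10^{\ell-1}\leq n<10^\ell$, and let $\ell':=L(n)$. If $\ell'<\ell$, then the forward direction applied to $\ell'$ gives $n<10^{\ell'}\leq 10^{\ell-1}$, contradicting the lower bound. If $\ell'>\ell$, then the forward direction gives $n\geq 10^{\ell'-1}\geq 10^\ell$, contradicting the upper bound. Hence $\ell'=\ell$, as desired. Alternatively, one could invoke Lemma \ref{lem:ob} (monotonicity of $L$) together with the values $L(10^{\ell-1})=\ell$ and $L(10^\ell - 1)=\ell$ to squeeze $L(n)$ into $\{\ell\}$.

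There is no real obstacle here; the only thing to be careful about is the convention $L(0)=0$, which the hypothesis $n\geq 1$ and $\ell\geq 1$ safely excludes, so no edge case needs special handling. I expect the whole proof to be a few lines.
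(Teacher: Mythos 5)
Your proof is correct; the paper itself gives no argument for this lemma, stating only that it ``follows immediately from definitions,'' and your writeup is exactly the routine verification (leading digit $\geq 1$ for the lower bound, digits $\leq 9$ for the upper bound, then trichotomy on $L(n)$ for the converse) that the authors are implicitly relying on. Nothing is missing.
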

\begin{lemma}\label{lem:jessie}
  Let $m,n\geq1$ be integers. Then
  \begin{equation}\label{eq:exp}
    L(mn) \ = \ L(m)+ L(n) - [mn\ <\ 10^{L(m)+L(n)-1}].
  \end{equation}
  In particular,
  \begin{equation}\label{eq:ineqL}
    L(m)+L(n)-1\ \le \  L(mn)\ \le \  L(m) + L(n).
   \end{equation}
\end{lemma}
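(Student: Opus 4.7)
The plan is to reduce everything to the characterization of $L$ given by Lemma \ref{lem:ran}, namely that $L(n) = \ell$ iff $10^{\ell-1} \le n < 10^{\ell}$. Set $\ell_1 := L(m)$ and $\ell_2 := L(n)$; then
\[
  10^{\ell_1 - 1} \ \le\  m\  <\  10^{\ell_1}, \qquad 10^{\ell_2 - 1}\  \le\  n\  <\  10^{\ell_2}.
\]
Multiplying these two pairs of inequalities yields $10^{\ell_1 + \ell_2 - 2} \le mn < 10^{\ell_1 + \ell_2}$, so another application of Lemma \ref{lem:ran} forces $L(mn) \in \{\ell_1 + \ell_2 - 1,\ \ell_1 + \ell_2\}$.

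Next, I would distinguish which of the two values is attained by comparing $mn$ with the threshold $10^{\ell_1 + \ell_2 - 1}$. By Lemma \ref{lem:ran} again, $L(mn) = \ell_1 + \ell_2$ exactly when $mn \ge 10^{\ell_1 + \ell_2 - 1}$, and $L(mn) = \ell_1 + \ell_2 - 1$ exactly when $mn < 10^{\ell_1 + \ell_2 - 1}$. Packaging both cases with the Iverson bracket gives exactly
\[
  L(mn) \ =\  L(m) + L(n) - [mn\ <\ 10^{L(m) + L(n) - 1}],
\]
which is \eqref{eq:exp}. The ``in particular'' statement \eqref{eq:ineqL} then follows immediately from the fact that the Iverson bracket takes only the values $0$ and $1$.

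There is no real obstacle in this argument; the lemma is essentially bookkeeping with the defining inequalities for $L$. The only subtlety to be careful of is the direction of strict versus non-strict inequalities when multiplying — since Lemma \ref{lem:ran} gives $\le$ on the left and $<$ on the right, the product inequalities inherit the same shape and the boundary cases (e.g.\ $mn = 10^{\ell_1+\ell_2-1}$) are handled correctly by the $<$ inside the Iverson bracket.
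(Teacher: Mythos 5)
Your proposal is correct and follows essentially the same route as the paper: apply Lemma \ref{lem:ran} to $m$ and $n$, multiply the resulting inequalities to pin $L(mn)$ down to two possible values, and resolve the dichotomy with the Iverson bracket. Nothing is missing.
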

\begin{proof}
By Lemma \ref{lem:ran}, $10^{L(m)-1}\leq m<10^{L(m)}$ and $10^{L(n)-1}\leq n<10^{L(n)}$. Therefore
\begin{equation}
  10^{L(m)+L(n)-2}\ \le \  mn\ <\ 10^{L(m)+L(n)}.
\end{equation}
If $mn<10^{L(m)+L(n)-1}$, then by Lemma \ref{lem:ran},
\begin{equation}
  L(mn) \ = \ L(m) + L(n) - 1 \ = \ L(m) + L(n) - [mn\ <\ 10^{L(m)+L(n)-1}].
\end{equation}
If $10^{L(m)+L(n)-1}\leq mn$, then by Lemma \ref{lem:ran},
\begin{equation}
  L(mn) \ = \ L(m) + L(n) \ = \ L(m) + L(n) - [mn\ <\ 10^{L(m)+L(n)-1}].
\end{equation}
This proves \eqref{eq:exp}. Because an Iverson bracket is always $0$ or $1$, clearly \eqref{eq:ineqL} holds.
\end{proof}
\begin{lemma}\label{lem:longineq}
Let $n_1,\ldots,n_k\geq1$ be integers. Then
\begin{equation}
  L(n_1\cdots n_k)\ \geq\  L(n_1) +\cdots+ L(n_k) - (k-1).
\end{equation}
\end{lemma}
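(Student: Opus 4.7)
The plan is to prove this by induction on $k$, bootstrapping off the lower bound in Lemma \ref{lem:jessie}. The base case $k=1$ is the trivial identity $L(n_1) \geq L(n_1)$, and the case $k=2$ is exactly the left inequality in \eqref{eq:ineqL}.

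For the inductive step, I would assume the inequality holds for $k-1$ factors and apply Lemma \ref{lem:jessie} to the product $(n_1 \cdots n_{k-1}) \cdot n_k$. Concretely, writing $N := n_1 \cdots n_{k-1}$, the lemma gives $L(N \cdot n_k) \geq L(N) + L(n_k) - 1$, and the inductive hypothesis gives $L(N) \geq L(n_1) + \cdots + L(n_{k-1}) - (k-2)$. Adding these yields
\begin{equation}
  L(n_1 \cdots n_k) \ \geq\ L(n_1) + \cdots + L(n_{k-1}) - (k-2) + L(n_k) - 1 \ =\  L(n_1) + \cdots + L(n_k) - (k-1),
\end{equation}
completing the induction.

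There is no real obstacle here: the lemma is a clean telescoping of the $k=2$ case, and the only thing to be careful about is the bookkeeping of the constant, namely that each application of Lemma \ref{lem:jessie} loses one unit, so after $k-1$ such applications the total loss is $k-1$. I would not bother with any refinement (e.g.\ tracking the Iverson bracket from \eqref{eq:exp}) since the stated bound is already in its sharpest form consistent with an induction from the two-factor case.
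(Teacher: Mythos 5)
Your proof is correct and is essentially the paper's own argument: the paper simply states that the lemma ``follows by repeated application of the left inequality in \eqref{eq:ineqL} in Lemma \ref{lem:jessie},'' and your induction on $k$ is just that repeated application written out carefully, with the correct bookkeeping of the constant $k-1$.
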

\begin{proof}
This follows by repeated application of the left inequality in \eqref{eq:ineqL} in Lemma \ref{lem:jessie}.
\end{proof}
The following is an elementary inequality which essentially says that the sum is no greater than the product and which we do not prove.
\begin{lemma}\label{lem:stack}
  Let $x_1,\ldots,x_k\geq2$ be real numbers. Then
  \begin{equation}
  x_1 +\cdots+x_k\ \le \  x_1\cdots x_k.
  \end{equation}
\end{lemma}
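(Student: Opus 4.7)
The plan is to prove this inequality by induction on $k$. The base case $k=1$ reduces to the tautology $x_1\leq x_1$. The key content appears already at $k=2$: the desired inequality $x_1+x_2\leq x_1x_2$ rearranges to
\[
  (x_1-1)(x_2-1)\ \geq\ 1,
\]
which holds because each factor on the left is at least $1$ under the hypothesis $x_i\geq2$.

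For the inductive step from $k$ to $k+1$, I would let $x_1,\ldots,x_{k+1}\geq2$ be given and set $y:=x_1\cdots x_k$. Since each $x_i\geq2$, we have $y\geq 2^k\geq 2$. The inductive hypothesis applied to $x_1,\ldots,x_k$ gives
\[
  x_1+\cdots+x_k\ \leq\ y,
\]
and the already-established $k=2$ case applied to the two numbers $y$ and $x_{k+1}$, both of which are at least $2$, gives
\[
  y+x_{k+1}\ \leq\ y\cdot x_{k+1}\ =\ x_1\cdots x_{k+1}.
\]
Chaining the two inequalities yields $x_1+\cdots+x_{k+1}\leq x_1\cdots x_{k+1}$, completing the induction.

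I do not anticipate any real obstacle. The only moment of genuine content is the factoring $(x_1-1)(x_2-1)\geq 1$ handling the two-variable case; everything else is formal. The hypothesis $x_i\geq 2$ is essential and is used precisely at this step, as well as to ensure that the partial product $y=x_1\cdots x_k$ stays $\geq 2$ so that the two-variable case can be reapplied at each inductive step.
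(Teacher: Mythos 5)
Your proof is correct. Note that the paper explicitly declines to prove this lemma (``which we do not prove''), so there is no argument to compare against; your induction, with the two-variable case handled by the factorization $(x_1-1)(x_2-1)\geq 1$ and the observation that the partial product $y=x_1\cdots x_k\geq 2^k\geq 2$ keeps the two-variable case applicable at each step, is a complete and standard way to fill that gap.
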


\begin{lemma}\label{lem:preless}
Let $p$ be a prime and $\alpha\geq0$ an integer. Then
\begin{itemize}
  \item[{\rm (i)}] if $\alpha\in\{0,1\}$, then $v(p^\alpha)\leq p^\alpha\leq p+\alpha$;
  \item[{\rm (ii)}] if $\alpha>1$, then $v(p^\alpha)\ =\  p+\alpha\leq p^\alpha$.
\end{itemize}
\end{lemma}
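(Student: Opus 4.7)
The lemma is a case check on $\alpha$, so I would split into $\alpha = 0$, $\alpha = 1$, and $\alpha \geq 2$, and verify each directly from the definition of $v$ given in \eqref{v-p^a}.

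For part (i), I would handle $\alpha = 0$ first. Since $v$ is additive, $v(1) = 0$, so $v(p^0) = 0 \leq 1 = p^0$, and $p^0 = 1 \leq p = p + 0$ because $p \geq 2$. For $\alpha = 1$, the definition gives $\iota(1) = 0$, so $v(p) = p$, and then $v(p) = p \leq p = p^1 \leq p + 1$ is immediate. Both inequalities hold trivially.

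For part (ii), with $\alpha > 1$, the equality $v(p^\alpha) = p + \alpha$ is exactly \eqref{v-p^a} since $\iota(\alpha) = \alpha$ when $\alpha > 1$. The only content is the inequality $p + \alpha \leq p^\alpha$ for $p \geq 2$ and $\alpha \geq 2$. I would obtain this in two steps using Lemma \ref{lem:stack}: first, applying the lemma with $x_1 = p \geq 2$ and $x_2 = \alpha \geq 2$ yields $p + \alpha \leq p \alpha$; second, I need $p\alpha \leq p^\alpha$, i.e.\ $\alpha \leq p^{\alpha - 1}$. Since $p \geq 2$, it suffices to verify $\alpha \leq 2^{\alpha - 1}$ for $\alpha \geq 1$, which is a standard induction (base $\alpha = 1$ gives $1 \leq 1$, and $2^\alpha = 2 \cdot 2^{\alpha - 1} \geq 2\alpha \geq \alpha + 1$).

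There is essentially no obstacle here; the proof is a routine verification from the definitions, and the only place where the prime powers interact nontrivially with arithmetic is the bound $p + \alpha \leq p^\alpha$, which drops out of Lemma \ref{lem:stack} combined with the elementary estimate $\alpha \leq 2^{\alpha - 1}$.
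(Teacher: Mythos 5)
Your proof is correct and follows essentially the same route as the paper: part (i) is a direct check, and part (ii) chains $p+\alpha \le p\alpha \le p^\alpha$, with the first inequality coming from Lemma \ref{lem:stack} applied to $x_1=p$, $x_2=\alpha$. The only cosmetic difference is in the second inequality: the paper obtains $p\alpha \le p^\alpha$ by a second application of Lemma \ref{lem:stack} (writing $p\alpha$ as the sum of $\alpha$ copies of $p$), whereas you reprove the equivalent elementary bound $\alpha \le 2^{\alpha-1}$ by induction.
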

\begin{proof}
  (i) can be easily checked. For (ii), we have, using Lemma \ref{lem:stack} twice,
  \begin{equation}
    v(p^\alpha) \ =\  p +\alpha\ \le \  p\alpha\  =\  \underbrace{p +\cdots + p}_{\alpha} \ \le \  p^\alpha.
  \end{equation}
\end{proof}

\begin{lemma}
\label{lem:ineq}
  Let $n\geq1$ be an integer. Then $v(n)\leq n$ and $L(v(n))\leq L(n)$.
\end{lemma}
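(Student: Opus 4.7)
The plan is to prove both inequalities by induction-free case analysis on the prime factorization of $n$, leveraging the additivity of $v$ together with Lemmas~\ref{lem:preless} and~\ref{lem:stack}.

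First I would handle the trivial case $n=1$ separately: since $v(1)=0$ (empty sum) we have $v(1)\le1$ and $L(v(1))=L(0)=0\le1=L(1)$. For $n\ge2$, I write the factorization $n=p_1^{\alpha_1}\cdots p_k^{\alpha_k}$ and use additivity to express $v(n)=\sum_{i=1}^k v(p_i^{\alpha_i})$. Lemma~\ref{lem:preless} supplies the per-factor bound $v(p_i^{\alpha_i})\le p_i^{\alpha_i}$ in both cases $\alpha_i=1$ and $\alpha_i\ge2$.

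Next I would split on the number of prime factors. If $k=1$, the inequality $v(n)\le n$ is exactly the content of Lemma~\ref{lem:preless}. If $k\ge2$, each $p_i^{\alpha_i}\ge2$, so I can apply Lemma~\ref{lem:stack} to the real numbers $x_i=p_i^{\alpha_i}$ to obtain
\begin{equation}
v(n)\ =\ \sum_{i=1}^k v(p_i^{\alpha_i})\ \le\ \sum_{i=1}^k p_i^{\alpha_i}\ \le\ \prod_{i=1}^k p_i^{\alpha_i}\ =\ n.
\end{equation}
This gives $v(n)\le n$ in every case.

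For the second claim, I observe that $v(n)\ge0$ always (it is a sum of positive terms, or zero when $n=1$), so $0\le v(n)\le n$, and Lemma~\ref{lem:ob} immediately yields $L(v(n))\le L(n)$. No step looks to be a real obstacle; the only mildly delicate point is remembering to separate the $k=1$ case (where Lemma~\ref{lem:stack} is not applicable) from the $k\ge2$ case, and handling $n=1$ so that the length statement $L(0)\le L(1)$ is covered by the convention $L(0):=0$ introduced earlier.
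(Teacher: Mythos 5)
Your proposal is correct and follows essentially the same route as the paper's proof: additivity of $v$, the per-factor bound from Lemma~\ref{lem:preless}, the sum-versus-product inequality of Lemma~\ref{lem:stack}, and then Lemma~\ref{lem:ob} for the length claim. Your extra care in separating $k=1$ and noting $v(n)\ge 0$ is harmless but not needed beyond what the paper already does.
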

\begin{proof}
We first prove that $v(n)\leq n$. We have $v(1) = 0 \leq 1$. Now assume that $n>1$ has prime factorization $n=p^{\alpha_1}_1\cdots p^{\alpha_k}_k$, where $p_1<\cdots<p_k$ are primes and $\alpha_1,\ldots,\alpha_k\geq1$ integers. Then using the fact that $v$ is additive and Lemmas \ref{lem:preless} and \ref{lem:stack},
\begin{align}
  v(n) &\ =\  v(p^{\alpha_1}_1\cdots p^{\alpha_k}_k) \ =\  v(p^{\alpha_1}_1) +\cdots+ v(p^{\alpha_k}_k)\nonumber\\
  &\ \le \  p^{\alpha_1}_1 +\cdots +  p^{\alpha_k}_k \ \le \  p^{\alpha_1}_1\cdots p^{\alpha_k}_k\  =\  n.
\end{align}
Now $L(v(n))\leq L(n)$ follows from $v(n)\leq n$ with Lemma \ref{lem:ob}.
\end{proof}

\begin{lemma}\label{lem:twoineq}
We have the following inequalities.
  \begin{itemize}
    \item[{\rm (i)}] If $x>-1$ is real, then $\log_2(10^{x+1}-1) <  (x+1)\log_210$.
    \item[{\rm (ii)}] If $n\geq2$ is an integer, then $(n+1)\log_210< 10^{n-1}$.
  \end{itemize}
\end{lemma}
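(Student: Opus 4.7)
For part (i), the plan is to apply the strict monotonicity of $\log_2$ to the trivial inequality $10^{x+1}-1<10^{x+1}$. The hypothesis $x>-1$ ensures $10^{x+1}-1>0$ so both logarithms make sense, and then $\log_2(10^{x+1}-1)<\log_2(10^{x+1})=(x+1)\log_210$ is immediate. No real obstacle here.

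For part (ii), I would proceed by induction on $n\ge 2$. For the base case $n=2$, the inequality reduces to $3\log_210<10$, i.e., $\log_210<10/3$, which holds since $\log_210=\log10/\log2<3.322$. For the inductive step, assume $(n+1)\log_210<10^{n-1}$ and show $(n+2)\log_210<10^n$. Writing
\begin{equation}
(n+2)\log_210 \ =\ (n+1)\log_210+\log_210 \ <\ 10^{n-1}+\log_210,
\end{equation}
it suffices to verify $10^{n-1}+\log_210\le 10^n$, i.e., $\log_210\le 9\cdot 10^{n-1}$. Since $n\ge 2$, the right-hand side is at least $90$, while $\log_210<4$, so the inequality holds with much room to spare.

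The only mild subtlety is the base case, which requires a numerical estimate of $\log_210$. Once that is in hand, the induction closes easily because multiplying the right-hand side by $10$ dominates the additive growth on the left. I do not anticipate any significant obstacles in either part.
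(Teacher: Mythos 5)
Your proposal is correct and takes essentially the same approach as the paper: part (i) is identical, and for part (ii) the paper shows the quotient $q(n)=10^{n-1}/(n+1)$ is increasing with $q(2)=10/3>\log_210$, which is just a repackaging of your induction (same base case $3\log_210<10$, same observation that the right side grows by a factor of $10$ each step).
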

\begin{proof}
\begin{itemize}
  \item[{\rm (i)}] Since $\log_2$ is strictly increasing, we have
  \begin{equation}
    \log_2(10^{x+1}-1)\  <\   \log_2(10^{x+1}) \ =\  (x+1)\log_210.
    \end{equation}
  \item[{\rm (ii)}] Define the function
  \begin{equation}
    q(n) \ =\  \frac{10^{n-1}}{n+1},\quad\text{for integers $n\geq0$}.
  \end{equation}
  Then for any $n\geq0$,
  \begin{equation}
    q(n+1) \ =\  \frac{10^n}{n+2} \ =\  \frac{10^{n-1}}{n+1}\cdot \frac{10(n+1)}{n+2} \ =\  q(n)\cdot \frac{10(n+1)}{n+2}.
  \end{equation}
  As $10(n+1)/(n+2)>1$ for $n\geq0$, we see that $q(n)$ is strictly increasing. Now because
  \begin{equation}
   q(2) \ =\  \frac{10}{3} \ =\  3.\overline{3}\ >\  3.32\cdots \ =\  \log_210,
  \end{equation}
  we see that $q(n)>\log_210$ for $n\geq2$, which is exactly what is required.
\end{itemize}
\end{proof}

\section{Setup}\label{sec:setup}

It can be checked by a computer that there are no prime $v$-palindromes of fewer than $5$ decimal digits. Therefore assume that $p$ is a prime $v$-palindrome of $m+1$ decimal digits, where $m\geq4$. In particular, according to Definition \ref{def.4}, $p\neq r(p)$ and $p=v(p) = v(r(p))$. Consequently,
\begin{equation}\label{eq:prange}
10^m+3\ \leq\  p\  \leq\  10^{m+1}-3.
\end{equation}
By the end of Section \ref{sec:otherdec}, we will have deduced that $p=5\cdot 10^m-1$ and a bit more.

In the case $r(p)<p$, by Lemma \ref{lem:ineq}, $v(r(p))\leq r(p)<p$, and thus $p = v(r(p))$ cannot hold. Therefore we may assume that $r(p)>p$. Further, in the case $r(p)$ is prime, $v(r(p))=r(p)>p$, and thus again $p = v(r(p))$ cannot hold. Therefore we may assume that $r(p)$ is composite. Suppose that
\begin{equation}\label{eq:fact}
  r(p)\ =\ fq^\beta,
\end{equation}
where $q$ is the largest prime factor of $r(p)$ and $q^\beta$ the highest power of $q$ dividing $r(p)$, namely, $q^\beta\parallel r(p)$. Let the number of decimal digits of $q$ be denoted by $\ell$, i.e., $L(q) =\ell$.

\ \\
\textbf{\emph{We shall assume the conditions and notation laid out in this section throughout the rest of this paper, without explicitly stating such assumptions in each lemma below. }} \ \\

\begin{lemma}\label{lem:basic}
We have the following:
\begin{itemize}
  \item[{\rm (i)}] $v(f)\ =\ p-v(q^{\beta})\ =\ p-q-\iota(\beta)$,
    \item[{\rm (ii)}] $L(v(f))\ =\ L(p-q-\iota(\beta))$,
    \item[{\rm (iii)}] $L(v(f))\ \geq\  L(p-q-\beta)$.
\end{itemize}
\end{lemma}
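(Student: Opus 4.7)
The plan is to handle (i) first with a direct computation, and then to obtain (ii) and (iii) as short consequences.

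For part (i), the key is that $v$ is additive together with the fact that $q^\beta\parallel r(p)$ forces $\gcd(f,q^\beta)=1$. Additivity then gives
\begin{equation}
v(r(p))\ =\ v(f)+v(q^\beta)\ =\ v(f)+q+\iota(\beta),
\end{equation}
using \eqref{v-p^a} in the last step. Since $p$ is itself a prime $v$-palindrome, $v(p)=p$ and $v(p)=v(r(p))$, so
\begin{equation}
p\ =\ v(f)+q+\iota(\beta),
\end{equation}
which rearranges to $v(f)=p-v(q^{\beta})=p-q-\iota(\beta)$, yielding (i).

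Part (ii) would follow at once by applying $L$ to both sides of the identity in (i); this is valid since $v(f)\geq 0$ and $L$ is defined on $\mathbb{N}\cup\{0\}$ with $L(0):=0$. For part (iii), observe that $\iota(\beta)\leq\beta$ directly from the definition of $\iota$, so $p-q-\iota(\beta)\geq p-q-\beta$. When $\beta>1$ the two sides are in fact equal and (iii) is automatic. When $\beta=1$ the inequality is strict, and before invoking the monotonicity of $L$ (Lemma \ref{lem:ob}) one has to check that $p-q-\beta=p-q-1\geq 0$. For this I would argue that $v(f)=p-q\geq 0$ holds in any case, and $v(f)=0$ would force $f=1$ (since clearly $v(n)\geq 2$ for $n>1$), making $r(p)=q$ prime and contradicting the standing assumption that $r(p)$ is composite; hence $p>q$, so $p\geq q+1$, and Lemma \ref{lem:ob} then delivers the inequality.

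The only moment that requires care is the small nonnegativity check just described in the $\beta=1$ case; everything else is a direct unwinding of definitions together with the additivity of $v$ and the coprimality $\gcd(f,q^\beta)=1$. I do not anticipate any genuine obstacle, and the value of the lemma is really that it packages the identity $v(f)=p-q-\iota(\beta)$ and its two length-theoretic shadows in a form that can be fed cleanly into the case analyses of the subsequent sections.
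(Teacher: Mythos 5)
Your proposal is correct and follows essentially the same route as the paper: part (i) by applying the additive function $v$ to the factorization $r(p)=fq^\beta$ (the paper leaves the coprimality $\gcd(f,q^\beta)=1$ implicit, which you rightly make explicit), part (ii) by applying $L$, and part (iii) by splitting on $\beta>1$ versus $\beta=1$ and using $f>1$ (from $r(p)$ composite) to get the nonnegativity needed for Lemma \ref{lem:ob}. The paper's version of the $\beta=1$ case notes $v(f)=p-q\geq 2$ so that $p-q-1\geq 1$, which is the same observation you make in slightly different packaging.
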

\begin{proof}
\begin{itemize}
  \item[{\rm (i)}] This follows by applying $v$ to \eqref{eq:fact}.
  \item[{\rm(ii)}] This follows by applying $L$ to part (i).
  \item[{\rm(iii)}] If $\beta>1$, then (ii) becomes $L(v(f))=L(p-q-\beta)$. If $\beta = 1$, then because $r(p)$ is composite, $f>1$, and so part (i) implies $2\leq v(f) = p-q$. Therefore
  \begin{equation}
    v(f)\ >\ p - q - 1\ \geq\ 1,
  \end{equation}
  and by Lemma \ref{lem:ob}, $L(v(f))\geq L(p-q-1)$.
\end{itemize}
\end{proof}

\begin{lemma}\label{lem:srange}
  We have $\beta\leq \log_2(10^{m+1}-1) <  10^{m-1}$.
\end{lemma}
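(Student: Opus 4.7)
The plan is to establish the two inequalities separately, using the bounds on $r(p)$ from the setup together with Lemma \ref{lem:twoineq}.

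First, I would observe that $p$ has $m+1$ decimal digits and $10\nmid p$, which means the units digit of $p$ is nonzero; consequently the decimal reversal $r(p)$ also has exactly $m+1$ digits. By Lemma \ref{lem:ran}, this gives the bound $r(p)\le 10^{m+1}-1$. Since $q\ge 2$ is a prime and $q^\beta$ divides $r(p)$, we have
\begin{equation*}
  2^\beta \;\le\; q^\beta \;\le\; r(p) \;\le\; 10^{m+1}-1,
\end{equation*}
and taking $\log_2$ of both sides yields the first inequality $\beta\le \log_2(10^{m+1}-1)$.

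For the second inequality, the idea is to chain the two parts of Lemma \ref{lem:twoineq}. Applying part (i) with $x=m$ (which satisfies $x>-1$) gives $\log_2(10^{m+1}-1)<(m+1)\log_2 10$. Applying part (ii) with $n=m$, which is allowed because we are assuming $m\ge 4\ge 2$, gives $(m+1)\log_2 10 < 10^{m-1}$. Stringing these together produces $\log_2(10^{m+1}-1)<10^{m-1}$, as required.

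There is no real obstacle here: the lemma is essentially bookkeeping on the size of $r(p)$ combined with the elementary inequalities already proved in Lemma \ref{lem:twoineq}. The only point worth stating carefully is the justification that $r(p)$ actually has $m+1$ digits (so that the bound $r(p)\le 10^{m+1}-1$ is valid), which relies on the standing assumption $10\nmid p$ from Definition \ref{def.4}.
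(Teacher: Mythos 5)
Your proof is correct and follows essentially the same route as the paper: bound $q^\beta\le r(p)\le 10^{m+1}-1$ to get $\beta\le\log_2(10^{m+1}-1)$, then chain both parts of Lemma \ref{lem:twoineq} for the second inequality. The only cosmetic difference is that the paper passes through $\log_q$ before relaxing to $\log_2$, whereas you use $2^\beta\le q^\beta$ directly; your explicit remark that $10\nmid p$ guarantees $r(p)$ has exactly $m+1$ digits is a reasonable extra justification of a fact the paper assumes in its setup.
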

\begin{proof}
Since $r(p)=fq^\beta$ in equation \eqref{eq:fact}, we have $r(p)\geq q^\beta$. Also, because $r(p)$ has $m+1$ decimal digits, we have $r(p)\leq 10^{m+1}-1$. Consequently,
\begin{equation}
  \beta\log q\ \le \  \log r(p)\ \le \  \log (10^{m+1}-1),
\end{equation}
and so
\begin{equation}
  \beta\ \le \  \log_q(10^{m+1}-1)\ \le \  \log_2(10^{m+1}-1).
\end{equation}
That $\log_2(10^{m+1}-1) <  10^{m-1}$ follows from Lemma \ref{lem:twoineq}, using both parts.
\end{proof}
\begin{lemma}
  \label{lem:pqs}
  If $\ell\leq m-1$, then
  \begin{itemize}
    \item[{\rm (i)}] $L(p-q-\beta)\ \geq\  m$,
    \item[{\rm (ii)}] $L(p-q-\iota(\beta))\ \geq\  m$,
    \item[{\rm (iii)}] $L(v(f))\ \geq\  m$.
  \end{itemize}
\end{lemma}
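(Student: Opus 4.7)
The plan is to prove (i) by a direct size estimate, then obtain (ii) and (iii) as short consequences using Lemmas \ref{lem:ob} and \ref{lem:basic} respectively.

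First I would translate the claim in (i) via Lemma \ref{lem:ran}: showing $L(p-q-\beta)\geq m$ is equivalent to checking $p-q-\beta\geq 10^{m-1}$ (provided this quantity is positive, which will be evident). To get such a lower bound I would gather three inputs. From the setup \eqref{eq:prange} we have $p\geq 10^m+3$. From the hypothesis $\ell\leq m-1$ together with Lemma \ref{lem:ran} applied to $q$, we get $q<10^{\ell}\leq 10^{m-1}$. From Lemma \ref{lem:srange} we get $\beta<10^{m-1}$. Combining these yields
\begin{equation}
p-q-\beta \ > \ 10^m + 3 - 10^{m-1} - 10^{m-1} \ = \ 8\cdot 10^{m-1} + 3 \ \geq \ 10^{m-1},
\end{equation}
which gives (i).

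For (ii), since $\iota(\beta)\leq\beta$ holds for every integer $\beta\geq 1$ (trivially when $\beta>1$, and because $\iota(1)=0$ otherwise), we have $0\leq p-q-\beta\leq p-q-\iota(\beta)$, so Lemma \ref{lem:ob} gives $L(p-q-\iota(\beta))\geq L(p-q-\beta)\geq m$ by (i). For (iii), I would simply invoke Lemma \ref{lem:basic}(iii): $L(v(f))\geq L(p-q-\beta)\geq m$ by (i).

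There is no real obstacle here; the argument is essentially numerical bookkeeping. The only point that requires a moment's care is confirming that $\beta<10^{m-1}$ can be used as an integer bound — this is immediate because Lemma \ref{lem:srange} provides a strict inequality and $\beta$ is an integer — and that the hypothesis $m\geq 4$ from Section \ref{sec:setup} safely covers the $m\geq 2$ required for Lemma \ref{lem:twoineq}(ii) invoked inside Lemma \ref{lem:srange}.
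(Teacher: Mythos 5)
Your proposal is correct and follows essentially the same route as the paper: part (i) by bounding $p-q-\beta$ from below using \eqref{eq:prange}, $q<10^{\ell}\leq 10^{m-1}$, and Lemma \ref{lem:srange} to land above $8\cdot 10^{m-1}$, then (ii) from $\iota(\beta)\leq\beta$ with Lemma \ref{lem:ob}, and (iii) from Lemma \ref{lem:basic}(iii). The only cosmetic difference is that the paper uses the bounds $q\leq 10^{\ell}-1$ and $\beta\leq 10^{m-1}-1$ to get the slightly sharper constant $8\cdot 10^{m-1}+5$, which changes nothing.
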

\begin{proof}
  \begin{itemize}
    \item[{\rm (i)}] Since $q$ has $\ell$ digits, $q\leq 10^{\ell}-1$. Together with \eqref{eq:prange} and Lemma \ref{lem:srange}, we have
    \begin{align}
      p-q-\beta&\ \geq\  10^m+3-10^{\ell}+1-10^{m-1}+1 \nonumber\\
      &\ \geq\  10^m+3-10^{m-1}+1-10^{m-1}+1 \nonumber\\
      &\ \geq\  8\cdot 10^{m-1}+5.
    \end{align}
    Therefore $p-q-\beta$ has at least $m$ decimal digits.
    \item[{\rm (ii)}] This is because $p-q-\iota(\beta)\geq p-q-\beta$.
    \item[{\rm (iii)}] This is by combining Lemma \ref{lem:basic}(iii) and part (i).
  \end{itemize}
\end{proof}

\begin{lemma}
\label{lem:ob2}
  We have
  \begin{itemize}
    \item[{\rm (i)}]
    $L(f) \ =\  m+1-L(q^{\beta})+[r(p)\ <\ 10^{L(f)+L(q^{\beta})-1}]$,
    \item[{\rm (ii)}]  $L(q^{\beta}) \ =\  m+1-L(f)+[r(p)\ <\ 10^{L(f)+L(q^\beta)-1}]$,
\item[{\rm (iii)}] $L(q^{\beta})\ \geq\  \ell$,
\item[{\rm (iv)}] $L(f)\ \leq\  m+1$,
\item[{\rm (v)}] $L(f)\ \leq\  m+2-\ell$,
\item[{\rm (vi)}] $L(f)\ \leq\  m+1-\beta(\ell-1)$, and 
\item[{\rm (vii)}] $L(p-q-\beta)\ \leq\  m+2-\ell$.
  \end{itemize}
\end{lemma}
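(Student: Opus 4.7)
The plan is to derive parts (i)--(vii) largely from Lemma \ref{lem:jessie} applied to the factorization $r(p) = f\cdot q^\beta$ and from some trivial length estimates.

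First I would handle (i) and (ii) simultaneously. Apply Lemma \ref{lem:jessie} to the product $r(p) = f\cdot q^\beta$. Since $L(r(p)) = m+1$ by hypothesis, equation \eqref{eq:exp} becomes
\begin{equation}
  m+1 \ =\ L(f) + L(q^\beta) - [r(p) \ <\ 10^{L(f)+L(q^\beta)-1}],
\end{equation}
and rearranging this identity yields (i) and (ii) directly.

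Next, (iii) is immediate from $q^\beta \geq q$ together with Lemma \ref{lem:ob}, which gives $L(q^\beta) \geq L(q) = \ell$. Similarly, (iv) follows from $f \leq r(p)$ and Lemma \ref{lem:ob}. For (v), I would start from (i) and use that the Iverson bracket is at most $1$ to get $L(f) \leq m+2 - L(q^\beta)$, then apply (iii). For (vi), the key refinement is to note that $L(q) = \ell$ forces $q \geq 10^{\ell-1}$ by Lemma \ref{lem:ran}, hence $q^\beta \geq 10^{\beta(\ell-1)}$ and therefore $L(q^\beta) \geq \beta(\ell-1)+1$ by Lemma \ref{lem:ran} again. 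Substituting this into the bound $L(f) \leq m+2 - L(q^\beta)$ from (i) gives $L(f) \leq m+1 - \beta(\ell-1)$.

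Finally, for (vii), I would chain Lemma \ref{lem:basic}(iii), Lemma \ref{lem:ineq}, and part (v): namely,
\begin{equation}
  L(p-q-\beta) \ \leq\ L(v(f)) \ \leq\ L(f) \ \leq\ m+2-\ell,
\end{equation}
where the first inequality is Lemma \ref{lem:basic}(iii), the second is $v(f)\le f$ from Lemma \ref{lem:ineq} combined with Lemma \ref{lem:ob}, and the third is (v). None of the steps look like a real obstacle; the only thing to be a bit careful about is not to conflate $L(q^\beta)$ with $\beta L(q)$ in (vi), since $L$ of a product is usually strictly less than the sum of the $L$'s, so the inequality goes the right way only because we bound $q^\beta$ from below by $10^{\beta(\ell-1)}$ rather than attempting a direct digit count.
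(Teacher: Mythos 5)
Your proposal is correct and follows essentially the same route as the paper: parts (i)--(v) and (vii) are argued identically, via Lemma \ref{lem:jessie} applied to $r(p)=fq^\beta$ and the chain $L(p-q-\beta)\le L(v(f))\le L(f)$. The only (immaterial) difference is in (vi), where the paper obtains $L(q^\beta)\ge\beta(\ell-1)+1$ by repeated use of the product-length inequality (Lemma \ref{lem:longineq}), whereas you get the same bound directly from $q\ge 10^{\ell-1}$, so $q^\beta\ge 10^{\beta(\ell-1)}$.
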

\begin{proof}
\begin{itemize}
  \item[{\rm (i)}] By \eqref{eq:fact} and Lemma \ref{lem:jessie}, we have
  \begin{align}
    m+1&\ =\ L(r(p)) \ =\  L(f q^\beta) \ =\  L(f)+L(q^\beta)-[fq^\beta\ <\ 10^{L(f)+L(q^\beta)-1}]\nonumber\\
    & \ =\  L(f)+L(q^\beta)-[r(p)\ <\ 10^{L(f)+L(q^\beta)-1}].
  \end{align}
  The required equality then follows by rearranging.
  \item[{\rm (ii)}] This follows by rearranging part (i).
  \item[{\rm (iii)}] Since $q^\beta\geq q$, by Lemma \ref{lem:ob} we have $L(q^\beta)\geq L(q)=\ell$.
  \item[{\rm (iv)}] By \eqref{eq:fact}, we have $f\leq r(p)$. Thus by Lemma \ref{lem:ob}, we have $L(f)\leq L(r(p)) = m+1$.
  \item[{\rm (v)}] By parts (i) and (iii) and the fact that an Iverson bracket must be no greater than $1$, we have
  \begin{align}
    L(f) &\ =\  m+1-L(q^{\beta})+[r(p)\ <\ 10^{L(f)+L(q^{\beta})-1}] \nonumber\\
    & \ \le \  m+1-\ell+1 \ =\  m+2-\ell.
  \end{align}
  \item[{\rm (vi)}] By Lemma \ref{lem:longineq}, we have
  \begin{equation}
    L(q^\beta)\  \geq\  \beta L(q)-(\beta-1) \ =\  \beta \ell - (\beta - 1) \ =\  \beta(\ell-1)+1.
    \end{equation}
    Consequently from part (i),
    \begin{align}
      L(f) &\ =\  m+1-L(q^{\beta})+[r(p)\ <\ 10^{L(f)+L(q^{\beta})-1}]\nonumber\\
      &\ \le \  m+1-\beta(\ell-1)-1+1 \ =\  m+1-\beta(\ell-1).
    \end{align}
    \item[{\rm (vii)}] By Lemma \ref{lem:basic}(iii), Lemma \ref{lem:ineq}, and part (v),
    \begin{equation}
      L(p-q-\beta)\ \le \  L(v(f))\ \le \  L(f)\ \le \  m+2-\ell.
    \end{equation}
\end{itemize}
\end{proof}

\section{The case $\ell\leq m$}\label{sec:leqm}

Since $r(p) = fq^\beta$ and the number of decimal digits of $r(p)$ and $q$ are $m+1$ and $\ell$, respectively, clearly $m+1\geq \ell$. In this section we consider the case $\ell\leq m$, dividing it into four cases corresponding to the four subsections below, and in each case show that a contradiction results. This means that necessarily $\ell = m+1$, which we consider in the next section.

\ \\
\subsection{Case $\ell=1$} Since $\ell=1\leq m-1$, by Lemma \ref{lem:pqs}(iii), $L(v(f))\geq m$. By Lemma \ref{lem:ineq} and Lemma \ref{lem:ob2}(iv),
\begin{equation}
  m\ \le \  L(v(f))\ \le \  L(f)\ \le \  m+1.
\end{equation}
By Lemma \ref{lem:ob2}(ii),
\begin{equation}
  L(q^{\beta}) \ =\  m+1-L(f)+[r(p)\ <\ 10^{L(f)+L(q^{\beta})-1}]\ \le \  1+[r(p)\ <\ 10^{L(f)+L(q^{\beta})-1}]\ \le \  2.
\end{equation}
So we have $L(q^{\beta})\leq 2$. If $q=2$, then because $q$ is the largest prime factor of $r(p)$, necessarily $f=1$. This implies that $r(p)=2^{\beta}$ has at most $2$ decimal digits, which is a contradiction. Hence $q\in\{3,5,7\}$. There remains only $8$ possibilities for $q^{\beta}$ and by checking one by one, it can be seen that $3\leq v(q^{\beta})\leq 9$. By Lemma \ref{lem:basic}(i) and \eqref{eq:prange},
\begin{equation}
10^m-6\ =\ 10^m+3-9\ \le \  v(f)\ =\ p-v(q^\beta)\ \le \  10^{m+1}-3-3 \ =\  10^{m+1}-6.
\end{equation}
Thus we have
\begin{equation}\label{eq:cons}
10^m-6 \ \le \   v(f)\ \le \  10^{m+1}-6.
\end{equation}
In the remainder of this subsection we discuss the cases $q=3$, $q=5$, and $q=7$, one by one, showing that each case leads to a contradiction. This means that the whole case $\ell=1$ leads to a contradiction.

\ \\
\underline{Sub case $q=3$}: We must have $r(p) = 2^{\gamma} 3^{\beta}$, where $\gamma\geq0$ is an integer. Since $L(3^\beta)\leq2$, we have $3^\beta\leq 81$. Therefore $10^4\leq r(p) \leq 2^\gamma \cdot 81$, and so $\gamma\geq7$. Thus because $f=2^{\gamma}$, \eqref{eq:cons} and Lemma \ref{lem:preless} implies
\begin{equation}
10^m-6\ \le \  v(f) \ = \ v(2^\gamma)\ \le \  2+\gamma.
\end{equation}
Consequently,
\begin{equation}
10^m-8 \ \le \   \gamma.
\end{equation}
Hence
\begin{equation}\label{eq:just}
  r(p) \ =\  2^{\gamma} 3^{\beta}\ \geq\  2^{10^m-8}\cdot 3\ \geq\  10^{m+1}
\end{equation}
(the last inequality can be shown to hold for $m\geq2$). This contradicts the fact that $L(r(p)) = m+1$.

\ \\
\underline{Sub case $q=5$}: We must have $r(p) = 2^{\delta}3^{\gamma}5^{\beta}$, where $\gamma,\delta\geq0$ are integers.  Thus because $f=2^{\delta}3^{\gamma}$, by \eqref{eq:cons} and Lemma \ref{lem:preless},
\begin{equation}\label{eq:tov}
10^m-6\ \le \   v(f) \ = \ v(2^\delta)+ v(3^\gamma) \ \le \  2+\delta+3+\gamma.
\end{equation}
Consequently,
\begin{equation}
\delta+\gamma\ \geq\  10^m-11.
\end{equation}
Hence
\begin{equation}
  r(p) \ = \ 2^{\delta}3^{\gamma}5^{\beta}\ \geq\  2^{\delta+\gamma}\cdot 5\ \geq\  2^{10^m-11}\cdot 5\ \geq\  10^{m+1}
\end{equation}
(the last inequality can be shown to hold for $m\geq2$). This contradicts the fact that $L(r(p)) = m+1$.

\ \\
\underline{Sub case $q=7$}: We must have $r(p) = 2^{\eta}3^{\delta}5^{\gamma}7^{\beta}$, where $\gamma, \delta, \eta \geq0$ are integers. Thus because $f=2^{\eta}3^{\delta}5^{\gamma}$, by \eqref{eq:cons} and Lemma \ref{lem:preless},
\begin{equation}\label{eq:tovs}
10^m-6\ \le \   v(f) \ = \ v(2^\eta)+v(3^\delta)+v(5^\gamma) \ \le \  10+\eta+\delta+\gamma.
\end{equation}
Consequently,
\begin{equation}
  \eta+\delta+\gamma\ \geq\  10^m-16.
\end{equation}
We then have
\begin{equation}
r(p)\ \geq\  2^{\eta+\delta+\gamma}\cdot 7\ \geq\  2^{10^m-16}\cdot 7\ \geq\  10^{m+1}
\end{equation}
(the last inequality can be shown to hold for $m\geq2$). This contradicts the fact that $L(r(p)) = m+1$.

\ \\

\subsection{Case $\ell=2$} By Lemma \ref{lem:ob2}(iii), $L(q^{\beta})\geq 2$, and by Lemma \ref{lem:ob2}(v), $L(f)\leq m$. Since $\ell=2\leq m-1$, by Lemma \ref{lem:pqs}(iii), $L(v(f))\geq m$. By Lemma \ref{lem:ineq},
\begin{equation}
  m\ \le \  L(v(f))\ \le \  L(f)\ \le \  m.
\end{equation}
Therefore $L(f) = L(v(f)) = m$. By Lemma \ref{lem:ob2}(vi),
\begin{equation}
  m\ =\ L(f)\ \le \  m+1-\beta,
\end{equation}
and thus $\beta=1$. Hence \eqref{eq:fact} simplifies to $r(p)=fq$. Since $\ell=2$, we have $11\leq q\leq 97$. Therefore because $L(r(p)) = m+1$,
\begin{equation}
  f\ =\ \frac{r(p)}{q}\ <\  \frac{10^{m+1}}{11}\ \le \  10^m-100
\end{equation}
(it can be shown that the rightmost inequality holds for $m\geq4$). By taking the $v$ of $r(p) = fq$, we have
\begin{equation}
  p \ =\  v(f) + q \ \le \  f+97\ <\ 10^m-100 + 97\ =\ 10^m-3.
\end{equation}
This implies that $L(p)<m+1$, which is a contradiction.

\ \\

\subsection{Case $3\leq \ell\leq m-1$ }Since $\ell\leq m-1$, by Lemma \ref{lem:pqs}(i) and Lemma \ref{lem:ob2}(vii),
\begin{equation}
  m\ \le \  L(p-q-\beta)\ \le \  m+2-\ell.
\end{equation}
This implies that $\ell\leq 2$, a contradiction. Hence this case is impossible.

\ \\
\subsection{Case $\ell=m$} By Lemma \ref{lem:ob2}(iii), $L(q^{\beta})\geq m$, and by Lemma \ref{lem:ob2}(v), $L(f)\leq 2$.
Therefore $L(f)\in\{1,2\}$. By Lemma \ref{lem:ob2}(vi),
\begin{equation}
  1\ \le \  L(f)\ \le \  m+1-\beta(m-1).
\end{equation}
This implies that
\begin{equation}
  \beta\ \le \ \frac{m}{m-1}\ =\ 1+\frac{1}{m-1},
\end{equation}
and so $\beta=1$. Therefore $r(p) = fq$. If $q=2$, then $f = 1$ and so $r(p)=2$, which contradicts $L(r(p)) = m+1\geq5$. Hence $q$ is an odd prime. In addition, if $f=1$, then $r(p)=q$ is prime, contrary to our assumption that $r(p)$ is composite. Hence $f>1$. By Lemma \ref{lem:basic}(i), $v(f) = p-q$, and so $v(f)$ is even. In the following we consider the cases $L(f) = 1$ and $L(f) = 2$ separately, showing that each leads to a contradiction and so ultimately this case $\ell=m$ is also impossible.

\ \\
\underline{Sub case $L(f)=1$}: Since $v(f)$ is even and $2\leq f\leq 9$, we have $f\in\{2,4\}$. In the case $f=2$, we have $2=p-q$. Then by \eqref{eq:prange},
\begin{equation}
q\ =\ p-2\ \geq\  10^m+3-2\ =\ 10^m+1,
\end{equation}
contradicting that $L(q) = m$. In the case $f=4$, we have $4=p-q$. Similarly by \eqref{eq:prange},
\begin{equation}
q\ =\ p-4\ \geq\  10^m+3-4\ =\ 10^m-1.
\end{equation}
As $L(q) = m$, we have $q=10^m-1$, contradicting the primeness of $q$.

\ \\
\underline{Sub case $L(f)=2$}: Since $v(f)$ is even and $10 \leq f\leq 99$, we see that $f$ must be one of
\begin{align}
&15,\
16,\
21,\
24,\
27,\
30,\
33,\
35,\
39,\
40,\
42,\
45,\
51,\
54,\
55,\
56,\
57,\
60,\
63,\nonumber\\
&64,65,\
66,\
69,70, 72,\
75,\
77,\
78,\
84,\
85,\
87,\
88,\
90,\
91,\
93,\
95,\
96,\
99,
\end{align}
with $v(f)$ being one of
\begin{align}
6,\ 8,\ 10,\ 12,\ 14,\ 16,\ 18,\ 20,\ 22,\ 24,\ 26,\ 32,\ 34.
\end{align}
Consequently, by \eqref{eq:prange},
\begin{equation}
  q\ =\ p-v(f)\ \geq\  10^m+3 -34 \ =\  10^m-31,
\end{equation}
and so
\begin{equation}
  r(p) \ =\  fq \ \geq\  15 (10^m-31)\ \geq\  10^{m+1}
\end{equation}
(it can be shown that the rightmost inequality holds for $m\geq2$). This contradicts the fact that $L(r(p)) = m+1$.

\section{The case $\ell\ =\ m+1$}\label{sec:mp1}
In this section we consider the case $\ell=m+1$ and narrow down the potentially possible values of $p$ more, i.e., deduce more necessary conditions.

By Lemma \ref{lem:ob2}(v), $L(f)=1$.
By Lemma \ref{lem:ob2}(vi),
\begin{equation}
  1 \ =\  L(f)\ \le \  m+1-\beta m.
\end{equation}
This implies that $\beta\leq 1$, and so $\beta=1$. Therefore $r(p) = fq$. If $q=2$, then $f = 1$ and so $r(p)=2$, which contradicts $L(r(p)) = m+1\geq5$. Hence $q$ is an odd prime. In addition, if $f=1$, then $r(p)=q$ is prime, contrary to our assumption that $r(p)$ is composite. Hence $f>1$. By Lemma \ref{lem:basic}(i), $v(f) = p-q$, and so $v(f)$ is even. As $L(f) = 1$, we see that $v(f) = f\in\{2,4\}$. Consequently, $r(p)$ must be even.

Let the decimal representations of $p$, $r(p)$, and $q$ be
\begin{align}
  p &\ =\  (a_m\cdots a_0)_{10},\label{eq:reprp}\\
  r(p) &\ =\   (a_0\cdots a_m)_{10},\label{eq:reprrp}\\
  q & \ =\  (b_m\cdots b_0)_{10}, \label{eq:repq}
\end{align}
where $a_m,b_m\neq0$.
As $p$ is odd and prime, $a_0\in\{1,3,7,9\}$, and as $r(p)$ is even, $a_m\in\{2,4,6,8\}$. Since $v(f) = p-q$, we have $q = p-v(f)$, and so
\begin{equation}\label{eq:long}
  (b_m\cdots b_0)_{10} \ =\  (a_m\cdots a_0)_{10}-v(f).
\end{equation}
Consequently, because $v(f)\in\{2,4\}$,
\begin{equation}
  a_m\cdot 10^m-4 \ \le \  (b_m\cdots b_0)_{10}\ <\  (a_m\cdots a_0)_{10},
\end{equation}
and so $b_m\in\{a_m-1,a_m\}$. Since $r(p) = fq$,
\begin{equation}\label{eq:rfq}
  (a_0\cdots a_m)_{10} \ =\ f(b_m\cdots b_0)_{10}.
\end{equation}
This implies that
\begin{equation}\label{eq:forcon}
  fb_m\ \le \  a_0.
  \end{equation}
  Hence
\begin{equation}\label{eq:erange}
  a_m-1\ \le \  b_m\ \le \  \frac{a_0}{f},\quad\text{which implies that}\quad a_m\ \le \ 1+\frac{a_0}{f}\ \le \  1+\frac{9}{f}.
\end{equation}
Notice that from \eqref{eq:long} we have
\begin{equation}\label{eq:mod1}
  b_0\ \equiv\  a_0-v(f)\pmod{10},
\end{equation}
and that from \eqref{eq:rfq} we have
\begin{equation}\label{eq:mod2}
  a_m\  \equiv\  fb_0 \pmod{10}.
\end{equation}
In the following we consider the cases $f=2$ and $f=4$ separately, corresponding to two subsections. For $f=2$, we show that necessarily $a_m=4$, $a_0=9$, $b_m=4$, and $b_0=7$; while for $f=4$, we show that a contradiction results.

\ \\
\subsection{Case $f=2$}\eqref{eq:mod1} and \eqref{eq:mod2} become respectively
\begin{eqnarray}
  b_0 & \ \equiv\ & a_0-2\pmod{10}, \label{eq:mod12}\\
  a_m & \ \equiv\ & 2b_0 \pmod{10} \label{eq:mod22}.
\end{eqnarray}
By \eqref{eq:erange}, $a_m\leq 1+9/2 = 5.5$ and so as $a_m\in\{2,4,6,8\}$, we have $a_m\in\{2,4\}$. In the following we consider the cases $a_m=2$ and $a_m=4$ separately. For $a_m=2$, we show that a contradiction results; while for $a_m=4$, we show that necessarily $a_0=9$, $b_m=4$, and $b_0=7$.

\ \\
\underline{Sub case $a_m=2$}: \eqref{eq:mod22} becomes $2\equiv 2b_0 \pmod{10}$, or equivalently, $b_0\equiv 1\pmod{5}$. Thus as $0\leq b_0<10$, we have $b_0\in\{1,6\}$. By \eqref{eq:mod12}, we have modulo $10$,
\begin{equation}
  a_0\ \equiv\  b_0+2\ \equiv\ \begin{cases}
            3,& \text{if $b_0\ =\ 1$}, \\
            8, &\text{if $b_0\ =\ 6$}.
            \end{cases}
\end{equation}
Thus as $0\leq a_0<10$,
\begin{equation}
  a_0\ =\ \begin{cases}
            3,& \text{if $b_0\ =\ 1$}, \\
            8, &\text{if $b_0\ =\ 6$}.
            \end{cases}
\end{equation}
As $a_0\in\{1,3,7,9\}$, we have $b_0=1$ and $a_0=3$. Consequently, \eqref{eq:long} becomes
\begin{equation}
  (b_m\cdots 1)_{10} \ =\  (2\cdots 3)_{10}-2,
\end{equation}
which means that $b_m=2$. Then however, \eqref{eq:forcon} becomes $2\cdot 2\leq 3$, which is false and we have a contradiction.

\ \\
\underline{Sub case $a_m=4$}: \eqref{eq:mod22} becomes $4\equiv 2b_0 \pmod{10}$, or equivalently, $b_0\equiv 2\pmod{5}$. Thus as $0\leq b_0<10$, we have $b_0\in\{2,7\}$. By \eqref{eq:mod12}, we have modulo $10$,
\begin{equation}
  a_0\ \equiv\  b_0+2\ \equiv\ \begin{cases}
            4,& \text{if $b_0\ =\ 2$}, \\
            9, &\text{if $b_0\ =\ 7$}.
            \end{cases}
\end{equation}
Thus as $0\leq a_0<10$,
\begin{equation}
  a_0\ =\ \begin{cases}
            4,& \text{if $b_0\ =\ 2$}, \\
            9, &\text{if $b_0\ =\ 7$}.
            \end{cases}
\end{equation}
As $a_0\in\{1,3,7,9\}$, we have $b_0=7$ and $a_0=9$. Consequently, \eqref{eq:long} becomes
\begin{equation}
  (b_m\cdots 7)_{10} \ =\  (4\cdots 9)_{10}-2,
\end{equation}
which means that $b_m=4$. Then \eqref{eq:forcon} becomes $2\cdot 4\leq 9$, which is true and so we do not have a contradiction like we just did in the case $a_m=2$.

\ \\
\subsection{Case $f=4$} \eqref{eq:mod1} and \eqref{eq:mod2} become respectively
\begin{eqnarray}
  b_0 & \ \equiv\ & a_0-4\pmod{10}, \label{eq:mod14}\\
  a_m & \ \equiv \ & 4b_0 \pmod{10} \label{eq:mod24}.
\end{eqnarray}
By \eqref{eq:erange}, $a_m\leq 1+9/4 = 3.25$ and so as $a_m\in\{2,4,6,8\}$, we have $a_m=2$.

Thus \eqref{eq:mod24} becomes $2\equiv 4b_0 \pmod{10}$, or equivalently, $2b_0\equiv 1\pmod{5}$, or equivalently, $b_0\equiv 3\pmod{5}$. Thus as $0\leq b_0<10$, we have $b_0\in\{3,8\}$. By \eqref{eq:mod14}, we have modulo $10$,
\begin{equation}
  a_0\ \equiv\  b_0+4\ \equiv\ \begin{cases}
            7,& \text{if $b_0\ =\ 3$}, \\
            12, &\text{if $b_0\ =\ 8$}.
            \end{cases}
\end{equation}
Thus as $0\leq a_0<10$,
\begin{equation}
  a_0\ =\ \begin{cases}
            7,& \text{if $b_0\ =\ 3$}, \\
            2, &\text{if $b_0\ =\ 8$}.
            \end{cases}
\end{equation}
As $a_0\in\{1,3,7,9\}$, we have $b_0=3$ and $a_0=7$. Consequently, \eqref{eq:long} becomes
\begin{equation}
  (b_m\cdots 3)_{10} \ =\  (2\cdots 7)_{10}-4,
\end{equation}
which means that $b_m=2$. Then however, \eqref{eq:forcon} becomes $4\cdot 2\leq 7$, which is false and we have a contradiction.

\section{The other decimal digits of $p$}\label{sec:otherdec}
As a result of Sections \ref{sec:setup} through \ref{sec:mp1}, we see that if $p$ is a prime $v$-palindrome, then the following are true:
\begin{itemize}
  \item[{\rm (i)}] $p$ has $m+1$ decimal digits, for some $m\geq4$,
  \item[{\rm (ii)}] $p$ is of the form $(4\cdots 9)_{10}$, i.e., its leftmost decimal digit is $4$ and its rightmost decimal digit is $9$,
  \item[{\rm (iii)}] $p-2$ is prime, and
  \item[{\rm (iv)}] $r(p) = 2(p-2)$.
\end{itemize}
In this section we show further that all other decimal digits of $p$ must be $9$'s as well, and thus $p = 5\cdot 10^m - 1$. Filling in what we know into \eqref{eq:reprp}, \eqref{eq:reprrp}, and \eqref{eq:repq}, we have
\begin{align}
  p &\ =\  (4,a_{m-1},\ldots,a_1,9)_{10},\label{eq:reprpN}\\
  r(p) &\ =\   (9,a_1,\ldots,a_{m-1},4)_{10},\label{eq:reprrpN}\\
  p-2 & \ =\  (4,a_{m-1},\ldots ,a_1,7)_{10}. \label{eq:repqN}
\end{align}
Since $r(p) = 2(p-2)$,
\begin{equation}\label{eq:rfqN}
  (9,a_1,\ldots,a_{m-1},4)_{10} \ =\ 2(4,a_{m-1},\ldots ,a_1,7)_{10}.
\end{equation}
We need to prove that
\begin{equation}\label{eq:need}
  a_i \ =\  a_{m-i} \ =\  9,\quad \text{for $1\ \leq\  i\ \leq\  \left\lfloor\frac{m}{2}\right\rfloor$}.
\end{equation}

For integers $0\leq I\leq \left\lfloor\frac{m}{2}\right\rfloor$, let $S(I)$ be the statement that
\begin{equation}
  a_i \ =\  a_{m-i} \ =\  9,\quad \text{for $1\ \leq\  i\ \leq\  I$}.
\end{equation}
We prove that $S(I)$ holds for all $0\leq I\leq \left\lfloor\frac{m}{2}\right\rfloor$ inductively, which will imply in particular that $S(\left\lfloor\frac{m}{2}\right\rfloor)$, i.e., \eqref{eq:need}, holds. Firstly, notice that $S(0)$ holds vacuously. Next, suppose that $S(I)$ holds for some $0\leq I< \left\lfloor\frac{m}{2}\right\rfloor$. We shall proceed to prove $S(I+1)$, which amounts to proving
\begin{equation}
  a_{I+1} \ =\  a_{m-I-1} \ =\  9.
\end{equation}
We have
\begin{align}
  p &\ =\  (4,\{9\}^{I},a_{m-I-1},\ldots,a_{I+1},\{9\}^{I+1})_{10},\\
  r(p) &\ =\  (\{9\}^{I+1},a_{I+1},\ldots,a_{m-I-1},\{9\}^{I},4)_{10},\\
  p-2 &\ =\  (4,\{9\}^{I},a_{m-I-1},\ldots,a_{I+1},\{9\}^{I},7)_{10},
\end{align}
where $\{9\}^j$ for some integer $j\geq0$ means that there are $j$ digits of $9$ consecutively; $\{9\}^0$ means that there is nothing. \eqref{eq:rfqN}
becomes
\begin{equation}\label{eq:crucial}
  (\{9\}^{I+1},a_{I+1},\ldots,a_{m-I-1},\{9\}^{I},4)_{10} \ =\ 2(4,\{9\}^{I},a_{m-I-1},\ldots,a_{I+1},\{9\}^{I},7)_{10}.
\end{equation}
If $a_{m-I-1}\leq 4$, then the right-hand side of \eqref{eq:crucial} must be of the form
\begin{equation}
  (\underbrace{9,\ldots,9}_{I},8,\ldots)_{10},
\end{equation}
which cannot equal the left-hand side. Therefore necessarily $5\leq a_{m-I-1}\leq 9$. Notice that the congruence
\begin{equation}\label{eq:crucialcong}
  2a_{I+1}+1\ \equiv\  a_{m-I-1}\pmod{10}
\end{equation}
follows from \eqref{eq:crucial}. Reducing this congruence to modulo $2$, we see that $a_{m-I-1}$ must be odd. Therefore necessarily $a_{m-I-1}\in\{5,7,9\}$. In the following we consider each such possible value of $a_{m-I-1}$, corresponding to three subsections.

\ \\
\subsection{Case $a_{m-I-1} = 5$} \eqref{eq:crucial} becomes
\begin{equation}\label{eq:crucial5}
  (\{9\}^{I+1},a_{I+1},\ldots,5,\{9\}^{I},4)_{10} \ =\ 2(4,\{9\}^{I},5,\ldots,a_{I+1},\{9\}^{I},7)_{10}
\end{equation}
and \eqref{eq:crucialcong} becomes
\begin{equation}
  2a_{I+1} + 1\ \equiv\  5\pmod{10},
\end{equation}
which forces $a_{I+1} \in \{2,7\}$. However, in view of integer multiplication, we see that the digit of $10^{m-I-1}$ of the right-hand side of \eqref{eq:crucial5} must be $0$ or $1$. This means that we need to have $a_{I+1}\in\{0,1\}$, which is impossible.

\ \\
\subsection{Case $a_{m-I-1} = 7$} \eqref{eq:crucial} becomes
\begin{equation}\label{eq:crucial7}
  (\{9\}^{I+1},a_{I+1},\ldots,7,\{9\}^{I},4)_{10} \ =\ 2(4,\{9\}^{I},7,\ldots,a_{I+1},\{9\}^{I},7)_{10}
\end{equation}
and \eqref{eq:crucialcong} becomes
\begin{equation}
  2a_{I+1} + 1\ \equiv\  7\pmod{10},
\end{equation}
which forces $a_{I+1} \in \{3,8\}$. However, in view of integer multiplication, we see that the digit of $10^{m-I-1}$ of the right-hand side of \eqref{eq:crucial7} must be $4$ or $5$. This means that we need to have $a_{I+1}\in\{4,5\}$, which is impossible.

\ \\
\subsection{Case $a_{m-I-1} = 9$}  \eqref{eq:crucial} becomes
\begin{equation}\label{eq:crucial9}
  (\{9\}^{I+1},a_{I+1},\ldots,\{9\}^{I+1},4)_{10} \ =\ 2(4,\{9\}^{I+1},\ldots,a_{I+1},\{9\}^{I},7)_{10}
\end{equation}
and \eqref{eq:crucialcong} becomes
\begin{equation}
  2a_{I+1}+ 1\ \equiv\  9\pmod{10},
\end{equation}
which forces $a_{I+1} \in \{4,9\}$.

Assume that $a_{I+1} = 4$, then \eqref{eq:crucial9} becomes
\begin{equation}\label{eq:crucial94}
  (\{9\}^{I+1},4,\ldots,\{9\}^{I+1},4)_{10} \ =\ 2(4,\{9\}^{I+1},\ldots,4,\{9\}^{I},7)_{10}.
\end{equation}
However, in view of integer multiplication, we see that the digit of $10^{m-I-1}$ of the right-hand side of \eqref{eq:crucial94} must be $8$ or $9$, in contrary to the left-hand side.  Hence we must have $a_{I+1}= 9$.

Notice that this completes the induction because we are in the final case of $a_{m-I-1} = 9$.

\section{Proof of the converse}\label{sec:converse}

Sections \ref{sec:setup} through \ref{sec:otherdec} proved the forward direction of Theorem \ref{thm:main}. In this section we prove the converse.

Let $p = 5\cdot 10^m - 1 = 4\underbrace{9\cdots 9}_{m}$, for some integer $m\geq4$, be a prime such that $p-2 = 5\cdot 10^m - 3$ is also prime. We show that $p$ is a $v$-palindrome. Firstly, clearly $10\nmid p$ and $p\neq r(p)$. We have
\begin{align}
  r(p) \ =\  r(4\underbrace{9\cdots 9}_{m}) \ =\  \underbrace{9\cdots 9}_{m}4 \ =\  2\cdot 4\underbrace{9\cdots 9}_{m-1}7 \ =\  2(p-2).
\end{align}
Consequently, as $p-2$ is an odd prime,
\begin{equation}
  v(r(p)) \ =\  v(2(p-2)) \ =\  2+(p-2) \ =\  p.
\end{equation}
This completes the proof.

\section{Number Of Prime $v$-palindromes}\label{heuristic}

Theorem \ref{thm:heuristicfinite} follows from standard models for prime numbers; we sketch below how a slightly weakened Cram\'er model, combined with our characterization of the form of prime $v$-palindromes, implies that there can only be finitely many.

For the standard Cram\'er model, one assumes that each integer $n$ is prime with probability on the order of $1/\log n$, and the probability any two numbers are both prime is simply the product of the probabilities. This of course is clearly false, as we know if $n \ge 2$ is even then it cannot be prime, and if $n \equiv -2 \bmod p$ for any prime $p < n$ then $n+2$ cannot be prime. However, our goal is simply to provide support, and thus we ignore the more refined arguments one can do (see for example \cite{Rub}). We assume instead that the probability $n$ and $n+2$ are both prime is bounded by $C / \log^2 n$ for some fixed $C$; as we are only trying to prove there are at most finitely many prime $v$-palindromes, we are fine with a slightly larger but still finite upper bound.

Let $T_n$ be the event that $5 \cdot 10^n - 3$ and $5 \cdot 10^n - 1$ are both prime, then the expected number of prime $v$-palindromes at most $10^{N+1}$ is \begin{equation} \sum_{n=1}^N 1 \cdot {\rm Prob}(T_n). \end{equation} As we are just concerned with supporting the conjecture that there are only finitely many, let us over-estimate and say
\begin{equation}\label{eq:heuristicsum} {\rm Prob}(T_n) \ \le \ \frac{C}{\log^2(5 \cdot 10^n - 3)} \ \le \  \frac{400C}{n^2 \log^2 10} \ \le
\ \frac{100C}{n^2}. \end{equation} As the sum of $1/n^2$ converges, the expected number of prime $v$-palindromes is finite.

\begin{rek} The Cram\'er model suggests we can take $C$ to be around 1. With such an assumption, given that there are no prime $v$-palindromes for the first several candidates of the form $5 \cdot 10^m - 1$, the expected number of numbers of this form that are the larger in a twin prime pair is less than 1/2, and thus we do not expect there to be any prime $v$-palindromes. \end{rek}

\begin{rek} While standard models predict the probability two integers of size $x$ differing by 2 are both prime is on the order of $1/\log^2 x$, a significantly larger bound would still imply there are only finitely many $v$-primes. For example, if we instead had the probability bounded by a quantity of size $1/\log^{1+\epsilon} x$ for any $\epsilon > 0$ we would still get a finite sum in \eqref{eq:heuristicsum}. \end{rek} 


\ \\

\end{document}